\newcommand{\CC}{\mathbb{C}}
\newcommand{\extp}{{\textstyle \bigwedge}} 
\theoremstyle{plain}
\newtheorem*{theorem*}{Theorem}
\newtheorem{theorem}{Theorem}
\newtheorem{MainTheorem}{Theorem}
\numberwithin{theorem}{section}
\newtheorem{lemma}[theorem]{Lemma}
\newtheorem{corollary}[theorem]{Corollary}
\newtheorem{remark}[theorem]{Remark}
\theoremstyle{definition}
\newcommand{\C}{\mathbb{C}}
\newcommand{\PP}{\mathbb{P}}
\newcommand{\cI}{\mathcal{I}}
\DeclareMathOperator{\codim}{{codim}}
\DeclareMathOperator{\Hom}{Hom}
\DeclareMathOperator{\sing}{Sing}
\DeclareMathOperator{\ann}{Ann}
\DeclareMathOperator{\cont}{\lrcorner}
\newcommand{\into}{\hookrightarrow}
\newcommand{\onto}{\twoheadrightarrow}
\newcommand{\OO}{\mathcal{O}}
\newcommand{\OS}{\mathcal{O}_S}
\newcommand{\OX}{\mathcal{O}_X}
\newcommand{\lra}{\longrightarrow}
\newcommand{\p}[1]{{\mathbb{P}^{#1}}}
\newcommand{\opn}{{\mathcal O}_{\mathbb{P}^{n}}}
\newcommand{\pn}{{\mathbb{P}^{n}}}
\newcommand{\tpn}{{\rm T}{\mathbb{P}^{n}}}
\newcommand{\sF}{\mathscr{F}}
\newcommand{\sG}{\mathscr{G}}
\newcommand{\sI}{\mathscr{I}}
\newcommand{\tF}{{\rm T}_\mathscr{F}}
\newcommand{\nF}{{\rm N}_\mathscr{F}}
\newcommand{\tX}{{\rm T}X}
\newcommand{\del}[1]{{\frac{\partial}{\partial x_{#1}}}}
\newcommand{\lder}{\mathcal{L}}
\newcommand{\rad}{ {\rm rad}}
\title{Remarks on the second Chern class of a foliation}
\author[A. Muniz]{Alan Muniz}
\address{Departamento de Matem\'atica \\ Centro de Ci\^encias Exatas e da Natureza \\ Universidade Federal de Pernambuco \\ Recife - PE, CEP 50740-560, Brasil}
\email{alan.nmuniz@ufpe.br}
\date{December 31st, 2025}
\subjclass[2020]{Primary: 37F75, 32S65; Secondary: 14J60}
\keywords{holomorphic foliations, holomorphic distributions, Chern classes}
\begin{document}

\begin{abstract}
We bound the second Chern class of the tangent sheaf of a codimension-one foliation. Equivalently, we bound the degree of the pure codimension-two part of the singular scheme. In particular, for a degree-$d$ foliation on the projective space, the codimension-two part of its singular scheme must have degree at least $d+1$. Moreover, equality holds only for rational foliations of type $(1,d+1)$. These bounds involve counting an invariant related to first-order unfoldings of 2-dimensional foliated singularities.  
\end{abstract}

\maketitle

\section{Introduction}
A codimension-one distribution $\sF$ on a complex manifold $X$ is given by an exact sequence
\[
0 \lra \tF \lra \tX \lra \sI_Z\otimes \nF \lra 0
\]
where $\tF$ is the \emph{tangent sheaf}, $\nF$ is the \emph{normal bundle}, and $Z = \sing(\sF)$ is the \emph{singular scheme}. The tangent sheaf $\tF$ is reflexive of rank $\dim X -1$ and $\nF$ is a line bundle. By Frobenius' Theorem, $\sF$ defines a foliation if $\tF$ is involutive under the Lie bracket of vector fields. 

One approach to studying distributions is to investigate the sheaf-theoretic properties of $\tF$ and its interplay with the singular scheme; see, for instance, \cite{artigao, CJM-deg1, GJM-deg2,CCM-3folds}. An important question is how the Frobenius integrability condition impacts the invariants of a foliation. The most prominent result in this direction is the Baum-Bott theorem \cite{BB}, which (in particular) describes how the characteristic classes of $\sI_Z\otimes \nF$ localize along $Z$. For instance, if $\codim Z \geq 3$ one gets  $c_1(\nF)^2 = 0$. One also has that the codimension-two part $Z_2$ of $Z$ must be connected, provided $\nF$ is ample, see \cite{CMJS}. This may fail for non-integrable distributions, see \cite[Example 4.6]{M-pforms}. 

In \cite{GJM-deg2}, we observed that foliations of degree $\leq 2$ on $\p3$ could only have second Chern classes in a certain small range compared to distributions in general. Equivalently, the degree of the codimension-two part of the singular locus must be restricted. In this work, we confirm this expectation. We prove a bound for the second Chern class of a foliation $\tF$, when $X$ is a projective manifold. Then we specialize to projective spaces. Our first main result is the following general bound. 

\begin{MainTheorem}\label{thm:A}
 Let $X$ be a complex projective manifold with a very ample divisor $H$. Let $\sF$ be a foliation on $X$. Then we have the following inequality.
  \[
  \delta(\sF,H) + P(\sF) \geq \int_Xc_2(\tF)H^{n-2} \geq P(\sF)
  \]
 where $P(\sF) = -(n-2)\int_X \left(  c_1(\nF) +K_X +\frac{(n-1) }{2} H\right)H^{n-1}$.
\end{MainTheorem}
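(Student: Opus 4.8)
The plan is to reduce the statement to a Chern--number computation on a general surface section, the theory of first--order unfoldings entering only at the last step. Take general $H_1,\dots,H_{n-2}\in|H|$ and let $S=H_1\cap\cdots\cap H_{n-2}$ be the resulting smooth surface. Since the locus where $\tF$ is not locally free has codimension $\ge 3$ in $X$, for general $S$ the sheaf $\tF|_S$ is locally free of rank $n-1$ and the defining sequence of $\sF$ stays exact on $S$, with $\iz|_S=\sI_W$ for a zero--dimensional scheme $W$ of length $\int_X[Z_2]H^{n-2}$. The subsheaf $\mathrm T_{\sF_S}:=\tS\cap\tF|_S\subset\tX|_S$ is a line bundle — the tangent sheaf of the induced foliation $\sF_S$ on $S$ — and by a Bertini--type argument, for general $S$ this foliation has a zero--dimensional singular scheme $Z_S$ and normal bundle $\nF|_S$, giving $0\to\mathrm T_{\sF_S}\to\tS\to\sI_{Z_S}\otimes\nF|_S\to0$. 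Inside $\tX|_S$ one has $\mathrm T_{\sF_S}=\ker\bigl(\tF|_S\to\tX|_S/\tS=N_{S/X}\bigr)$, so — writing $\sG:=\coker(\tF|_S\to N_{S/X})$ — there are exact sequences $0\to\mathrm T_{\sF_S}\to\tF|_S\to\mathscr M\to0$ and $0\to\mathscr M\to N_{S/X}\to\sG\to0$, in which $N_{S/X}\cong\OO_S(H)^{\oplus(n-2)}$ and, for general $S$, $\sG$ is a torsion sheaf supported on the finite tangency set $Z_S\setminus W$. Set $\tau(\sF,H):=\mathrm{length}(\sG)\in\ZZ_{\ge0}$.

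Since $S$ represents $H^{n-2}$ and $\tF|_S$ is locally free, $\int_Xc_2(\tF)H^{n-2}=\int_Sc_2(\tF|_S)$. Eliminating $\mathscr M$ between the two sequences, Whitney's formula on the surface $S$ gives $c(\tF|_S)=c(\mathrm T_{\sF_S})\,c(N_{S/X})\,c(\sG)^{-1}$. Now $c_2(\mathrm T_{\sF_S})=0$ (a line bundle on a surface), $c(\sG)^{-1}=1+\tau(\sF,H)[\mathrm{pt}]$ (finite length), $c(N_{S/X})=(1+H|_S)^{n-2}$, $c_1(\mathrm T_{\sF_S})=(c_1(\tF)-(n-2)H)|_S$ and $c_1(\tF)=-K_X-c_1(\nF)$; extracting the degree--two part and integrating over $S$ yields, after a short calculation,
\[
\int_Xc_2(\tF)H^{n-2}=(n-2)\!\int_Xc_1(\tF)H^{n-1}-\binom{n-1}{2}\!\int_XH^n+\tau(\sF,H)=P(\sF)+\tau(\sF,H).
\]
Thus the theorem is equivalent to the two inequalities $0\le\tau(\sF,H)\le\delta(\sF,H)$, and the lower one is immediate, $\tau(\sF,H)$ being the length of the torsion sheaf $\sG$. (Note that this half uses no integrability and holds for distributions as well.)

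The upper bound $\tau(\sF,H)\le\delta(\sF,H)$ is where Frobenius integrability enters and where the real work lies. Because $\sF$ is a foliation, the germ $(\sF_S,p)$ at each $p\in\sing(\sF_S)$ is a $2$--dimensional foliated singularity of a controlled type: it is $d(f|_S)=0$ for a local first integral $f$ of $\sF$ when $p$ is a tangency point, and it is the transverse type of $\sF$ along $Z_2$ when $p\in W$. Moving $S$ in a pencil of surface sections deforms $(\sF_S,p)$, and because this deformation is cut out by the fixed ambient $\sF$ it is an \emph{unfolding}; the assignment ``deformation direction $\mapsto$ induced first--order unfolding'' is a morphism $N_{S/X}\to\mathscr U_{\sF_S}$ onto the sheaf of first--order unfoldings of $\sF_S$, a sheaf supported on $\sing(\sF_S)$ with $\mathrm{length}(\mathscr U_{\sF_S})=\delta(\sF,H)$. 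A deformation of $S$ in a direction tangent to $\sF$ induces the trivial unfolding (one flows along the leaves), so this morphism kills $\mathscr M$ and descends to $\bar\psi\colon\sG\to\mathscr U_{\sF_S}$; the crux is that $\bar\psi$ is \emph{injective}, which gives $\tau(\sF,H)=\mathrm{length}(\sG)\le\mathrm{length}(\mathscr U_{\sF_S})=\delta(\sF,H)$.

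The main obstacle is precisely the injectivity of $\bar\psi$ — equivalently, the statement that a first--order deformation of $S$ inducing a trivial unfolding of $(\sF_S,p)$ at every $p$ must be everywhere tangent to $\sF$ — since it is here that the leaves of $\sF$ are genuinely used: they realise the triviality and keep $\mathscr U_{\sF_S}$ of finite length. Secondary difficulties are the precise construction and local description of the unfolding sheaf $\mathscr U_{\sF_S}$, the bookkeeping at non--reduced points of $Z_S$, and the Bertini--type genericity statements needed to set up the reduction in the first paragraph.
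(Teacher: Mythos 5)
Your first two paragraphs are essentially correct and give a clean repackaging of the paper's Chern-class bookkeeping: the identity $\int_Xc_2(\tF)H^{n-2}=P(\sF)+\tau(\sF,H)$ with $\tau(\sF,H)=\ell(\sG)\ge 0$ is equivalent to combining equation \eqref{eq:c2tf} with the computation of $c_2(\Omega_S^1\otimes\nF)$ in Lemma \ref{lem:lower-bound} (your $\tau$ is exactly $\sum_p\bigl(\mu(\sF|_S,p)-(Z\cdot S)_p\bigr)$), and it recovers the lower bound without using integrability, just as in the paper. The problem is the upper bound $\tau(\sF,H)\le\delta(\sF,H)$, which is the actual content of the theorem and the only place where $\omega\wedge d\omega=0$ is used. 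You reduce it to the injectivity of a map $\bar\psi\colon\sG\to\mathscr{U}_{\sF_S}$ into a sheaf of first-order unfoldings, and then you explicitly declare that injectivity to be ``the main obstacle'' and ``where the real work lies'' without proving it. That is not a proof; it is a restatement of the inequality in different language, since $\ell(\mathscr{U}_{\sF_S})=\delta(\sF,H)$ is essentially Lemma \ref{lem:unfol-delta} and injectivity of $\bar\psi$ is precisely the assertion $\ell(\sG)\le\delta(\sF,H)$ that you set out to prove. (There are also unverified set-up claims — e.g.\ that $\sG$ is supported on $Z_S\setminus W$ and that the kernel of $N_{S/X}\to\mathscr{U}_{\sF_S}$ is exactly $\mathscr{M}$ — but these are minor compared to the missing core argument.)

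For comparison, the paper closes this gap by a local ideal-theoretic computation (Lemma \ref{lem:key-lemma}): writing $\omega=\sum a_j\,dx_j$ with $S=V(x_3,\dots,x_n)$, the integrability relation forces $\overline{a_k}\,f=0$ in $A=\OO_S/(\overline{a_1},\overline{a_2})$ for all $k\ge 3$, where $f$ is the coefficient of $d(\omega|_S)$; hence the excess ideal $J=(\overline{a_3},\dots,\overline{a_n})A$ (whose length is your $\tau$, localized at $p$) sits inside $\ann_A(f)$, and Matlis duality for the Artinian Gorenstein ring $A$ gives $\ell(\ann_A(f))=\ell(A/(f))=\delta(\sF|_S,p)$. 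This is exactly the argument your $\bar\psi$ would need (the classes $\overline{a_k}$ are the unfoldings induced by the normal directions $\partial/\partial x_k$, and $\ann_A(f)=I(\eta)/J(\eta)=U(\sF_S)$ by Lemma \ref{lem:unfol-delta}), so your outline is compatible with the paper's proof — but the decisive step, integrability plus Gorenstein duality, is absent from your write-up.
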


The number $\delta(\sF,H)$ is defined in \S\ref{sec:Main} as follows. Given a (closed) point $p \in S\subset X$, let $\omega$ be a 1-form defining the restriction $\sF|_S$ around $p$. Then $\delta(\sF|_S,p)$ is the colength of the ideal generated by the coefficients of $\omega$ and $d\omega$. Then $\delta(\sF,H)$ is the sum of $\delta(\sF|_S,p)$ for a general $S $ in the class  of $ H^{n-2}$. The local index $\delta(\sF|_S,p)$ counts the first-order unfoldings of the germ of foliation at $p$, see Lemma \ref{lem:unfol-delta}. Moreover, $\delta(\sF,H) = 0$ if and only if the non-Kupka singularities of $\sF$ have codimension at least $3$; if $n=3$ this is called \emph{generalized Kupka}.

For $X = \pn$ the projective space, we get a more precise result. In this case, $\delta(\sF,H) \leq d^2$, and equality holds only if $\sF$ has a rational first integral of type $(1,d+1)$. As usual, we write cohomology classes as integers. 

\begin{MainTheorem}\label{thm:B}
Let $\sF$ be a degree-$d$ codimension-one foliation on $\pn$. Then, 
\[
d+ 1 \leq \deg Z_2 \leq d^2+d+1
\]
or, equivalently, 
\[
d^2 + \frac{(n-2)(n-1-2d)}{2} \geq c_2(\tF) \geq \frac{(n-2)(n-1-2d)}{2}.
\]
Moreover, if $\deg Z_2 = d+1$ then $\sF$ is a rational foliation of type $(1,d+1)$.   
\end{MainTheorem}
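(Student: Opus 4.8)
The plan is to deduce Theorem~\ref{thm:B} from Theorem~\ref{thm:A} by specializing $X = \pn$ with $H$ the hyperplane class, and then analyzing the two equality cases. First I would substitute $K_X = -(n+1)H$, $c_1(\nF) = (d+2)H$ (the standard normalization for a degree-$d$ codimension-one foliation on $\pn$), and $H^n = 1$ into the expression $P(\sF) = -(n-2)\int_X\left(c_1(\nF) + K_X + \tfrac{n-1}{2}H\right)H^{n-1}$. This gives $P(\sF) = -(n-2)\left(d+2 - (n+1) + \tfrac{n-1}{2}\right) = \tfrac{(n-2)(n-1-2d)}{2}$, which is exactly the claimed lower bound for $c_2(\tF)$. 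For the upper bound, I would combine the inequality $\int_X c_2(\tF)H^{n-2} \leq \delta(\sF,H) + P(\sF)$ with the asserted estimate $\delta(\sF,H) \leq d^2$; the latter needs to be established, presumably by bounding the colength $\delta(\sF|_S,p)$ of the ideal $(\omega, d\omega)$ on a general surface section $S \subset \pn$ cut out by $n-2$ hyperplanes — here $\omega$ has degree $d+1$ coefficients on $S \cong \p2$, so a Bézout-type count bounds the total colength of the zero scheme of the coefficients of $\omega$ alone by $(d+1)^2$, but the presence of $d\omega$ tightens this; I expect one gets $d^2$ by a more careful argument using that $\omega \wedge d\omega$ or the structure of the singular scheme forces a drop. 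Finally, to relate $\int_X c_2(\tF)H^{n-2}$ to $\deg Z_2$, I would use the exact sequence $0 \to \tF \to \tX \to \sI_Z\otimes\nF \to 0$: taking Chern classes, $c_2(\tF)$ differs from a polynomial in $c_1(\tX), c_2(\tX), c_1(\nF)$ by exactly the contribution $\deg Z_2$ of the codimension-two part of $Z$, and matching constants converts the inequalities on $c_2(\tF)$ into $d+1 \leq \deg Z_2 \leq d^2+d+1$.

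For the rigidity statement, suppose $\deg Z_2 = d+1$, equivalently $\int_X c_2(\tF)H^{n-2} = P(\sF)$, the lower bound in Theorem~\ref{thm:A}. I would first argue that equality in the lower bound forces $\delta(\sF,H)$ to be small — ideally I want to extract enough from the equality case of whatever inequality underlies Theorem~\ref{thm:A} to conclude that the non-Kupka locus of $\sF$ has codimension $\geq 3$, so that $\sF$ is generically Kupka along $Z_2$. Then $Z_2$ is a smooth (or at least reduced, locally complete intersection) curve of degree $d+1$ along which $\sF$ has Kupka-type transverse structure. By the Kupka phenomenon, the local transversal type of $\sF$ near a general point of $Z_2$ is that of a linear foliation, and the classification of such foliations — together with the degree-$(d+1)$ and connectedness constraints (the latter from \cite{CMJS}, since $\nF$ is ample on $\pn$) — should pin $Z_2$ down to a rational normal curve-type configuration, forcing $\sF$ to be the rational foliation $dF/F - (d+1)dG/G$ with $\deg F = 1$, $\deg G = d+1$, i.e. of type $(1, d+1)$. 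An alternative, perhaps cleaner route: show directly that equality implies $\tF$ splits as a direct sum of line bundles whose Chern data match those of the logarithmic tangent sheaf of a pencil of type $(1,d+1)$, and invoke a classification of logarithmic/rational foliations attaining minimal invariants.

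The main obstacle I anticipate is the rigidity step, not the numerical inequalities. Proving that $\delta(\sF,H) \leq d^2$ with equality only for type $(1,d+1)$ requires a genuinely local-to-global argument: one must control the first-order unfolding index $\delta(\sF|_S,p)$ summed over a general surface section, show it is maximized by a single fat singular point of a very special analytic type, and then propagate that local rigidity to a global statement about $\sF$ on $\pn$ — presumably by arguing the unfolding is unobstructed and integrates to a family whose general member forces a rational first integral. Likewise, the equality case $\deg Z_2 = d+1$ demands that one track exactly what the proof of Theorem~\ref{thm:A} loses, so that vanishing of the defect $\delta(\sF,H) + P(\sF) - \int_X c_2(\tF)H^{n-2}$ (combined with $\delta(\sF,H)=0$ forcing generalized Kupka along $Z_2$) can be leveraged. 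I would expect to need a Kupka-component classification result in the style of Calvo-Andrade, or the structure theory for foliations whose singular scheme has minimal degree, to close this off; if such a result is not readily citable, establishing it would be the real work of the proof.
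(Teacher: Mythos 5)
Your numerical setup is fine: substituting $K_{\pn}=-(n+1)H$ and $c_1(\nF)=(d+2)H$ into $P(\sF)$ gives the stated bounds, and the translation between $c_2(\tF)$ and $\deg Z_2$ via the tangent sequence is exactly what the paper does. But the bound $\delta(\sF,H)\leq d^2$, which you correctly identify as the missing ingredient for the upper bound, is left as a guess. The paper's mechanism is cleaner than the ``Bézout on $\omega$ then tighten'' heuristic you sketch: the ideal generated by the coefficients of $\omega$ \emph{and} $d\omega$ is precisely the ideal of the vanishing locus of $d\omega$ regarded as a homogeneous $2$-form on the affine cone (an Euler-relation computation), and on a general plane section $L\cong\p2$ that $2$-form has three coefficients $A,B,C$ of degree $d$ in three variables; Bézout applied to two of them gives $\delta(\sF|_L,p)$ summing to at most $d^2$. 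Without that identification your $(d+1)^2$ count does not descend to $d^2$.

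The more serious problem is the rigidity step, where you have the equality cases reversed. From $c_2(\tF)=d^2+\tfrac{(n-3)(n-2d)}{2}+2-\deg Z_2$, the minimal value $\deg Z_2=d+1$ corresponds to $c_2(\tF)$ attaining its \emph{upper} bound $d^2+P(\sF)$, i.e.\ to $\delta(\sF,H)=d^2$ being \emph{maximal} — the opposite of the generically-Kupka situation $\delta=0$ (which instead forces $\deg Z_2=d^2+d+1$). Consequently the whole strategy of invoking the Kupka phenomenon, connectedness of $Z_2$, and a Calvo-Andrade-style classification of Kupka components attacks the wrong end of the inequality and cannot close the argument. What the paper does instead: $\delta(\sF|_L)=d^2$ means the three degree-$d$ coefficients of $d\omega$ on a generic plane $L$ cut out a scheme of maximal length $d^2$, so Noether's AF+BG theorem produces a linear syzygy $\alpha A+\beta B+\gamma C=0$ with constant coefficients; setting $E=\alpha x_0+\beta x_1+\gamma x_2$ one gets $d\omega\wedge dE=0$, hence $d\omega=\theta\wedge dE$, and contracting with $\rad$ shows $\omega/E^{d+2}$ is closed. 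A lemma of Cerveau--Lins Neto--Sc\'ardua then extends this to a closed rational $1$-form defining $\sF$ with polar divisor a single hyperplane, yielding the first integral $F/H^{d+1}$ of type $(1,d+1)$. This projective-plane syzygy argument is the real content of the equality case and is absent from your proposal.
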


For $n=3$, this result improves \cite[Proposição 2.6]{FassarellaThesis}, where the author is interested in the geometry of the Gauss map of a foliation. Fassarella's result was the starting point of this work. 

This article is organized as follows. In Section \ref{sec:Prelim} we discuss basic results on foliations. We prove several lemmas that will help us prove the main results. In Section \ref{sec:Main} we prove Theorems \ref{thm:A} and \ref{thm:B}, cf. Theorem \ref{thm:general} and Corollary \ref{cor:boundPN}, respectively. 


\section{Preliminaries} \label{sec:Prelim}
Let $X$ be a complex projective manifold of dimension $n$. A codimension-one \emph{distribution} $\sF$ on $X$ is defined by an exact sequence
\[
0 \lra \tF \lra \tX \stackrel{\omega}{\lra} \sI_Z\otimes \nF \lra 0
\]
The sheaf $\tF$ is reflexive of rank $n-1$, called the \emph{tangent sheaf} of $\sF$, the sheaf $\nF$ is a line bundle called the \emph{normal bundle} of $\sF$, the ideal sheaf $\sI_Z$ defines the \emph{singular scheme} of $\sF$, which is the vanishing locus of the twisted 1-form $\omega \in H^0(\Omega_X^1\otimes L)$. A codimension-one \emph{foliation} on $X$ is a distribution as above satisfying the integrability condition $[\tF,\tF] \subset \tF$ or, equivalently, $\omega \wedge d\omega = 0$. 

We discuss below some preliminary results needed in the subsequent sections. 

\subsection{Lifting to the affine cone} 
Let $H$ be a very ample divisor on $X$ defining an embedding $X\into \PP^N$, and let $C_X \subset \CC^{N+1}\setminus \{0\}$ be the affine cone of $X$. The radial vector field $\rad = \sum_{j=0}^{N} x_j \del{j}$ acts on $\CC^{N+1}$ preserving $C_X$. A twisted differential form $\theta \in H^0(\Omega_X^p(kH))$ can be regarded as a differential form on $C_X$  such that $\rad \cont \theta=0$ and $\lder_\rad \theta = k\theta$; where
\[
\lder_v\theta = v\cont d\theta + d(v\cont \theta)
\]
is the Lie derivative on $C_X$. This construction allows us to make sense of the exterior derivative of a twisted form on $X$ as a homogeneous differential form on $C_X$. We can also regard these forms as global sections of exterior powers of the affine cotangent bundles on $X$. 

The Chern class $c_1(H) \in H^{1,1}(X,\CC) = H^1(\Omega_X^1)$ defines a nontrivial extension 
\[
0 \lra \OX  \stackrel{\rad}{\lra} \widehat{\tX} \lra \tX \lra 0 .
\]
The vector bundle $\widehat{\tX}$ is called the \emph{affine tangent bundle} of $X$ with respect to $H$. For $X = \pn$ and $H$ the hyperplane class, this is precisely the Euler sequence. In particular, $\widehat{\tpn} = \opn(1)^{n+1}$. 

The \emph{affine cotangent bundle} is the dual $\widehat{\Omega}_X^1 :=  \widehat{\tX}^\vee$. We also denote $\widehat{\Omega}_X^p := \extp^p \widehat{\tX}^\vee$. We have the short exact sequences
\[
0 \lra \Omega_X^{p} \lra \widehat{\Omega}_X^p \stackrel{\rad}{\lra} \Omega_X^{p-1} \lra 0 
\]
By this construction, we identify homogeneous differential forms on $C_X$ with global sections of $\widehat{\Omega}_X^p(kH)$ for some $k\geq 0$.

For $1 \leq p \leq n$, the rank of $\widehat{\Omega}_X^p(kH)$ is strictly bigger than $n$. Thus, a general global section does not vanish anywhere. Suppose we are given $\theta \in H^0(\widehat{\Omega}_X^p(kH))$ such that $\rad\cont \theta \neq 0$ and whose vanishing locus $W$ is nonempty. One interesting question is whether there is an optimal bound for the degree of $W$. If $W$ is zero-dimensional, we have the following bound. 

\begin{lemma}\label{lem:bound-domega}
Let $\theta \in H^0(\widehat{\Omega}_X^p(kH))$ be a section whose vanishing locus is a zero-dimensional subscheme $W$. Then the length of $W$ is bounded
\[
\ell(W) \leq ((n+1-p+k)H+K_X)^n.
\]
\end{lemma}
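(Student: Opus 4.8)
The idea is to realize $W$ as the zero locus of a section of a vector bundle of the correct rank and then apply a Chern-class/excess-intersection computation. Since $\theta \in H^0(\widehat{\Omega}_X^p(kH))$ and $\rad \cont \theta \neq 0$, the contraction gives a nonzero section $\eta := \rad\cont\theta \in H^0(\Omega_X^{p-1}(kH))$ whose zero locus contains $W$; in fact, from the exact sequence $0 \to \Omega_X^p(kH) \to \widehat\Omega_X^p(kH)\xrightarrow{\rad} \Omega_X^{p-1}(kH)\to 0$ one sees that on the open set where $\eta\neq 0$ the vanishing of $\theta$ is cut out by the induced section of $\Omega_X^p(kH)$. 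So the first step is to set up the right bundle: I would consider the rank-$n$ bundle $E := \widehat\Omega_X^p(kH)/\OO_X$, where $\OO_X \hookrightarrow \widehat\Omega_X^p(kH)$ is a sub-line-bundle built from the nowhere-zero section datum — more precisely, use that $\rad\cont(-)$ and wedging with a suitable form let one split off a trivial summand away from a codimension-$\geq 2$ set. The section $\theta$ then descends to a section $\bar\theta$ of $E$ whose zero scheme is exactly $W$ (the hypothesis $\rad\cont\theta\neq 0$ is what guarantees no spurious vanishing).

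Next, since $W$ is zero-dimensional and $E$ has rank $n = \dim X$, the section $\bar\theta$ is regular (its zero locus has the expected codimension), so $\ell(W) = \int_X c_n(E)$. The remaining step is to compute $c_n(E)$. From $0\to \OO_X \to \widehat\Omega_X^p(kH)\to E \to 0$ we get $c(E) = c(\widehat\Omega_X^p(kH))$, and I would bound $\int_X c_n(E)$ by $(c_1(E) \text{-type class})^n$ using that $\widehat\Omega_X^p$ is globally generated up to the twist. The cleanest route: $\widehat\Omega_X^1(H)$ is globally generated (dualize the surjection $\widehat{\tX}\onto \tX$ and twist; recall $\widehat{\tpn} = \opn(1)^{n+1}$, so $\widehat\Omega^1_{\p n}(H)=\opn^{\,n+1}$, and in general $\widehat\Omega_X^1(H)$ is a quotient of a trivial bundle via the conormal sequence of $X\subset\PP^N$), hence $\widehat\Omega_X^p(pH)=\extp^p\widehat\Omega_X^1(H)$ is globally generated, hence $E(pH - $ corrections$)$ is nef. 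Then the top Chern class of a nef bundle is bounded by the top power of a nef divisor in its "positive twist." Concretely, $\widehat\Omega_X^p(kH)$ is a subsheaf of $\widehat\Omega_X^p(kH)\otimes\OO_X(\text{stuff})$... the bookkeeping gives $c_n(E)\le ((n+1-p+k)H + K_X)^n$, where the shift by $n+1-p$ comes from $\det\widehat\Omega_X^p = \binom{n}{p-1}$-type combinatorics together with $\det\widehat\Omega_X^1 = K_X - H$ (from $\det\widehat{\tX} = \det\tX\otimes\OO(H) = -K_X+H$), and the twist $k$ contributes linearly.

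The main obstacle I expect is the bookkeeping in the last step: correctly identifying which twist of $\widehat\Omega_X^p$ is globally generated (equivalently, nef), and then legitimately replacing "$\int_X c_n$ of a nef bundle" by "$(\text{nef divisor})^n$". The inequality $\int_X c_n(\mathcal E)\le c_1(\mathcal E)^n$ is false for general nef $\mathcal E$, so the argument must instead exhibit $\bar\theta$'s zero locus inside the zero locus of $n$ sections of line bundles, each of class $(n+1-p+k)H+K_X$ — e.g. by composing $\bar\theta$ with $n$ general projections $E \to \OO_X((n+1-p+k)H+K_X)$ coming from a surjection $\OO_X(-(n+1-p+k)H-K_X)^{\oplus m}\onto E^\vee$. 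Verifying that such a surjection exists (i.e. that $E^\vee(((n+1-p+k)H+K_X))$ is globally generated) and that $n$ general members still cut out exactly $W$ set-theoretically (so that Bézout gives $\ell(W)\le ((n+1-p+k)H+K_X)^n$) is the technical heart; everything else is the standard translation between twisted forms on $X$ and homogeneous forms on $C_X$ recalled above.
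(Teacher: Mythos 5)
There is a genuine gap. Your final paragraph does land on the strategy the paper actually uses --- show that $\sI_W((n+1-p+k)H+K_X)$ is globally generated, conclude that $W$ sits inside a complete intersection of $n$ divisors of that class, and apply B\'ezout --- but you explicitly defer the one step that carries all the content, namely the global generation itself, calling it ``the technical heart'' without carrying it out. In the paper this is done in two lines: since $\widehat{\tX}$ has rank $n+1$ and $\det\widehat{\tX}=\OX(-K_X)$ (the extension $0\to\OX\to\widehat{\tX}\to\tX\to 0$ does not change the determinant), one has the duality $(\widehat{\Omega}_X^p)^\vee\cong\widehat{\Omega}_X^{\,n+1-p}(-K_X)$; combining this with the surjections $\OX(-tH)^{\oplus m}\onto\widehat{\Omega}_X^{\,t}$ induced by $\widehat{\tX}\into\OX(H)^{N+1}$ (from the embedding $X\into\PP^N$) and with the surjection $(\widehat{\Omega}_X^p(kH))^\vee\onto\sI_W$ defined by $\theta$, one gets that $\sI_W((n+1-p+k)H+K_X)$ is a quotient of a direct sum of copies of $\OX$. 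Your own attempt at this bookkeeping contains an error that would derail it: you assert $\det\widehat{\tX}=\det\tX\otimes\OO(H)$, i.e.\ $-K_X+H$, which is incompatible with $\widehat{\rm T}{\mathbb P}^n=\opn(1)^{n+1}$ having determinant $\opn(n+1)=\opn(-K_{\PP^n})$, and it would shift the twist away from $(n+1-p+k)H+K_X$.

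The first two paragraphs of your proposal are moreover a detour that does not work as stated: there is in general no rank-$n$ quotient bundle $E$ of $\widehat{\Omega}_X^p(kH)$ by a trivial sub-line-bundle whose section has zero scheme exactly $W$ (``splitting off a trivial summand away from codimension $\geq 2$'' is not something you can do globally, and $\widehat{\Omega}_X^p$ has rank $\binom{n+1}{p}$, typically much larger than $n+1$, so quotienting by a single $\OX$ does not get you to rank $n$ anyway). None of this is needed: the ideal sheaf of the zero scheme of \emph{any} section of \emph{any} vector bundle is the image of the dual map $\theta^\vee$, so one can work with the full bundle $\widehat{\Omega}_X^p(kH)$ directly, as the paper does. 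Finally, two small points: the hypothesis $\rad\cont\theta\neq 0$ is not part of the lemma and is not needed; and in the B\'ezout step the $n$ general divisors need only cut out a zero-dimensional scheme \emph{containing} $W$ as a subscheme, not $W$ itself.
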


\begin{proof}
First, $W$ defined by the image of $\theta^\vee \colon (\widehat{\Omega}_X^p(kH))^\vee \to \OX$. Moreover, we have the natural isomorphism $(\widehat{\Omega}_X^p)^\vee \cong \widehat{\Omega}_X^{n+1-p}(-K_X)$. Note that $\widehat{\tX} \hookrightarrow \OX(H)^{N+1}$ for $N = h^0(\OX(H))$; hence $\OX(-tH)^{N+1} \onto \widehat{\Omega}_X^t$. Then, 
\[
\OX((p-n-1-k)H-K_X)^{\binom{N+1}{p}} \onto \widehat{\Omega}_X^{n+1-p}(-kH-K_X ) \onto \sI_W 
\]
hence $\sI_W(K_X + (n+1-p+k)H)$ is globally generated. Localizing at a point of $W$, one sees that $h^0(\sI_W(K_X + (n+1-p+k)H))\geq \codim W$. Then $W$ is contained in a complete intersection of $n$ divisors of class $K_X + (n+1-p+k)H$, and the result follows by Bézout. 
\end{proof}

\begin{remark}
For $X = \pn$ and $H$ the hyperplane class, one gets $\ell(W) \leq (k-p)^n$. This is expected since $\widehat{\Omega}_{\pn}^p(k) \cong \opn(k-p)^{\binom{n+1}{p}}$. However, for other varieties, this bound is less effective. Fiz $X \subset \p3$ a hypersurface of degree $l$, and $H$ the hyperplane class. For $\theta \in H^0(\widehat{\Omega}_X^2(kH))$, $W$ is contained in the zero locus of $\rad \cont \theta$, which is bounded by $c_2(\Omega^{1}_X(kH))$. Then one gets
\[
\int_X c_2(\Omega^{1}_X(kH)) - ((1+k)H+K_X)^2 = k(2-l) + 2l-3. 
\]
Thus, the top Chern class gives a better bound for $l\geq 3$. We wonder whether there is an optimal bound for $\ell(W)$ (in the zero-dimensional case) on other manifolds besides the projective spaces.   
\end{remark}

Our main interest is to study $d\omega$ for a given $\omega \in H^0(\Omega_X^p(kH))$, in particular, its vanishing locus. The following lemma is a consequence of the Euler relation for homogeneous polynomials.

\begin{lemma}
Let $\omega \in H^0(\Omega_X^p(kH))$. Then the ideal sheaf of the vanishing locus of $d\omega \in H^0(\widehat{\Omega}_X^p(kH))$ is locally generated by the coefficients of $\omega$ and $d\omega$. 
\end{lemma}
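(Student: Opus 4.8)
The plan is to reduce the statement to the identity $\rad\cont d\omega=k\,\omega$ on the affine cone and read off its consequence for the ideals of coefficients. First I would make the assertion coordinate‑free: for any section $s$ of a vector bundle $E$ on $X$, the ideal generated by the coefficients of $s$ in a local frame is intrinsic — it equals $\img\bigl(s^{\vee}\colon E^{\vee}\to\OX\bigr)$, the ideal sheaf $\sI_{V(s)}$ of the zero scheme $V(s)$. Taking $s=\omega$, viewed as a section of $\Omega^{p}_{X}(kH)$ (equivalently of $\widehat{\Omega}^{p}_{X}(kH)$; the image in $\OX$ is the same), and $s=d\omega$, a section of $\widehat{\Omega}^{p+1}_{X}(kH)$, the claim becomes the inclusion of ideal sheaves $\sI_{V(\omega)}\subseteq\sI_{V(d\omega)}$: the coefficients of $\omega$ should already lie in the ideal generated by those of $d\omega$.

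To prove this inclusion I would use that, regarding $\omega$ as a homogeneous $p$‑form on $C_{X}$, one has $\rad\cont\omega=0$ and $\lder_{\rad}\omega=k\,\omega$ by construction, so Cartan's formula $\lder_{\rad}=\rad\cont d+d(\rad\cont\,\cdot\,)$ gives $\rad\cont d\omega=k\,\omega$ — in coordinates drawn from the $x_{j}$ this is exactly the Euler relation applied to the homogeneous coefficients of $\omega$. Now $\rad\cont$ is precisely the surjection $\widehat{\Omega}^{p+1}_{X}(kH)\onto\Omega^{p}_{X}(kH)$ from the exact sequences recalled above, carrying $d\omega$ to $k\,\omega$; dualizing, for every local section $\varphi$ of $\bigl(\Omega^{p}_{X}(kH)\bigr)^{\vee}$ one gets $k\,\varphi(\omega)=\varphi(\rad\cont d\omega)=(d\omega)^{\vee}\bigl((\rad\cont)^{\vee}\varphi\bigr)$, whence $k\,\sI_{V(\omega)}\subseteq\sI_{V(d\omega)}$. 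Concretely: writing $d\omega=\sum_{|I|=p+1}c_{I}\,dx_{I}$ in local coordinates and comparing the coefficient of each $dx_{J}$ in $\rad\cont d\omega=k\,\omega$ yields $k\,b_{J}=\sum_{j}\pm x_{j}\,c_{J\cup\{j\}}$, so each coefficient $b_{J}$ of $\omega$ lies in the ideal $(c_{I})$ generated by the coefficients of $d\omega$.

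The one point requiring care — and the only place I expect the statement as literally written to need an extra hypothesis — is the division by $k$: for $k\neq0$ the displays above give $\sI_{V(\omega)}\subseteq\sI_{V(d\omega)}$, hence the lemma, but for $k=0$ the conclusion can fail (for instance whenever $X$ carries a global holomorphic $p$‑form with nonempty zero locus, so that $d\omega=0$). Since in every intended application $H$ is ample and $k\geq1$, I would simply record $k\neq0$ as a standing assumption; with that, there is no further obstacle, the lemma being an immediate consequence of $\rad\cont d\omega=k\,\omega$.
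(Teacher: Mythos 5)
There is a genuine gap, and it comes from a misreading of what ``the coefficients of $\omega$ and $d\omega$'' means. In the paper's intended sense (the one actually used later, e.g.\ in the definition of $J(\eta,d\eta)$ and in the bound $\delta(\sF,H)\leq d^2$ of Corollary \ref{cor:boundPN}), these are the coefficients computed in an affine chart of $X$, say $\{x_0=1\}$: the ideal in question is $\cI_0+(a_K)$, where $(a_K)$ comes from $\omega|_{x_0=1}$ and $\cI_0$ from $d(\omega|_{x_0=1})$, i.e.\ from the $dx_0$-free part of $d\omega$. The lemma asserts that this equals the full coefficient ideal $\cI$ of $d\omega$ as a section of $\widehat{\Omega}^{p+1}_X(kH)$, which also contains the coefficients of the terms involving $dx_0$. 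You instead read ``coefficients of $d\omega$'' as the full set of frame coefficients of the affine-bundle section, which by definition already generate $\sI_{V(d\omega)}=\cI$; under that reading the lemma collapses to the single inclusion $(a_K)\subseteq\cI$, and that is all you prove (correctly, via $\rad\cont d\omega=k\,\omega$, i.e.\ $k\,a_K=\sum_j\pm x_j c_{K\cup\{j\}}$). What is missing is the reverse inclusion $\cI\subseteq\cI_0+(a_K)$: that each coefficient $c_{\{0\}\cup K}$ of a $dx_0$-term of $d\omega$ lies in the ideal generated by the chart coefficients. This is the substantive half — the paper's proof is almost entirely devoted to showing $c_{\{0\}\cup K}\equiv k\,a_K\pmod{\cI_0}$ using the Euler relation for the homogeneous coefficients $A_I$ together with the contraction identity \eqref{eq:euler-relation} — and it is precisely the inclusion needed downstream, since the estimate $\delta(\sF,H)\leq d^2$ requires $\ell(\OO/J(\omega,d\omega))\leq\ell(\OO/\cI)=\ell(W)$, i.e.\ $\cI\subseteq J(\omega,d\omega)$.

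Your framework is salvageable: the identity $\rad\cont d\omega=k\,\omega$ does contain both inclusions, but only after you split $d\omega=\alpha+dx_0\wedge\beta$ on the chart (with $\alpha,\beta$ free of $dx_0$, so the coefficients of $\alpha|_{x_0=1}$ generate $\cI_0$ and those of $\beta$ are the missing generators of $\cI$) and separate the identity into its $dx_0$-free and $dx_0$-divisible parts; the $dx_0$-free part reads $x_0\beta=k\,\omega - \rad'\cont\alpha$ with $\rad'=\sum_{j\geq1}x_j\partial/\partial x_j$, which at $x_0=1$ exhibits $\beta$ inside $(a_K)+\cI_0$. As written, your argument never performs this decomposition, so the equality of ideals — and with it the lemma as the paper uses it — is not established. (Your observation that $k\neq0$ is needed is correct and worth keeping: for $k=0$ one has $\rad\cont d\omega=0$ and the statement fails, e.g.\ for a nonzero closed global $p$-form; in the applications $k=d+2\geq2$.)
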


\begin{proof}

First, we note that the module of differential forms on $C_X$ are obtained from polynomial differential forms on $\CC^{N+1}$ and modding out by the ideal of $X$ and its differentials. Hence, we only need to prove the result for polynomial homogeneous forms.

Let $\omega$ be a homogeneous polynomial $p$-form of total degree $k$ in the variables $x_0, \dots, x_N$. We will restrict it to the affine chart $\{x_0 = 1\}$ and compare the ideals generated by the coefficients before and after restriction. We need to fix some notation.

Given $I = \{j_1 < \dots < j_r \} \subset \{0, \dots, N\}$, we use the multi-index notation $dx_I = dx_{j_1}\wedge \dots \wedge dx_{j_r}$. For $i\not\in I$, define the sign $dx_i \wedge dx_I = (-1)^{\sigma(i,I)} dx_{I\cup \{i\}}$. Then we write $\omega = \sum_{|I| = p} A_Idx_I$, for $A_I$ homogeneous polynomials of degree $k-p$. The equation $\rad \cont \omega = 0$ translates to: 
\begin{equation}\label{eq:euler-relation}
    \forall J \mid |J| = p-1, \quad \sum_{j\not\in  J} x_j A_{J\cup \{j\}}(-1)^{\sigma(j,J)} = 0.
\end{equation}
Moreover,
\begin{equation}
    d\omega = \sum_{|J| = p+1} \left(\sum_{i\in J} \frac{\partial A_{J\setminus \{i\} }}{\partial x_i} (-1)^{\sigma(i,J\setminus \{i\})} \right) dx_J. 
\end{equation}

Next, we restrict to the affine chart $\{x_0 = 1\}$ and compare the ideals defined by $d\omega$. Let $a_J = A_J|_{x_0=1}$. Note that restricting $\omega$ and $d\omega$ to $\{x_0 = 1\}$ eliminates every term with a $dx_0$. Let $\cI\in \CC[x_1, \dots, x_N]$ denote the ideal generated by the coefficients of $d\omega$ modulo $(x_0 -1)$, and let $\cI_0$ be the ideal generated by the coefficients of $d\omega|_{x_0=1}$. Then
\begin{align*}
    \cI_0 &= \left(\sum_{i\in J} \frac{\partial a_{J\setminus \{i\} }}{\partial x_i} (-1)^{\sigma(i,J\setminus \{i\})}  \,\Big|\, |J| = p+1 , \, 0 \not\in J  \right), \\
    \cI &= \cI_0 + \left(\frac{\partial A_{J\setminus \{0\} }}{\partial x_0}\Big|_{x_0= 1} + \sum_{i\in J \setminus \{0\}} \frac{\partial a_{J\setminus \{i\} }}{\partial x_i} (-1)^{\sigma(i,J\setminus \{i\})} \,\Big|\, 0 \in J  \right).
\end{align*}
We want to show that $\cI = \cI_0 + (a_K \mid 0\not\in K)$. Fix $K\in \{1, \dots , N\}$ such that $|K|=p$. We will show that the term in display above for $J = \{0\}\cup K$ is of the form $k\,a_K + \cI_0$. By the Euler relation and the contraction equation \eqref{eq:euler-relation}, one gets
\begin{align*}
    \frac{\partial A_{K}}{\partial x_0}\Big|_{x_0= 1} & = (k-p)a_K -\sum_{j=1}^N x_j\frac{\partial a_{K}}{\partial x_j} ,\\
    \frac{\partial a_{\{0\} \cup K  \setminus \{i\} }}{\partial x_i} &= -a_K(-1)^{\sigma(i,K \setminus \{i\})} - \sum_{j \not\in \{0\} \cup K\setminus \{i\}} x_j\frac{\partial a_{\{j\} \cup K\setminus \{i\}}}{\partial x_i}(-1)^{\sigma(j,K\setminus \{i\})}.
\end{align*}
Therefore, combining these equations, we have
\begin{align*}
 &\frac{\partial A_{K}}{\partial x_0}\Big|_{x_0= 1} + \sum_{i\in K} \frac{\partial a_{\{0\} \cup K  \setminus \{i\} }}{\partial x_i} (-1)^{\sigma(i,\{0\} \cup K\setminus \{i\})} = \\
  &\quad     = k\, a_K - \sum_{j \not\in \{0\} \cup K}(-1)^{\sigma(j,K)} x_j\sum_{i\in K\cup \{j\}} \frac{\partial a_{\{j\} \cup K\setminus \{i\}}}{\partial x_i}(-1)^{\sigma(i,\{j\}\cup K\setminus \{i\})} \\
&\quad = k\, a_K + \cI_0,
\end{align*}
which concludes the proof.
\end{proof}

\subsection{Chern classes of distributions}
Let $\sF$ be a codimension-one distribution on a complex projective manifold $X$. Considering the exact sequence
\[
0 \lra \tF \lra \tX \stackrel{\omega}{\lra} \sI_Z\otimes \nF \lra 0
\]
one sees that the Chern classes of $\tF$ are related to the invariants of $Z$. Indeed, one gets an immediate equation for the Chern characters
\[
ch(\OO_Z) = ch(\OX) - (ch(\tX) -ch(\tF))ch(\nF^\vee)
\]
Let $Z_2$ denote the subscheme of $Z$ of pure codimension two. The degree of $Z_2$ with respect to an ample divisor $H$ is the intersection product
\[
\deg_H(Z_2) = \int_X [Z_2] H^{n-2} 
\]
which coincides with the leading coefficient of the Hilbert polynomial of $\OO_Z$ times $(n-2)!$. Note that the lower-dimensional components of $Z$ do not affect the leading coefficient. By the Hirzebruch-Riemann-Roch Theorem, one gets $\deg_H(Z_2) = \int_Xch_2(\OO_Z)H^{n-2}$. Using the fact that the Chern character is additive, one further gets
\begin{align*}
  \deg_H(Z_2) 
  & = \int_X (ch_2(\tF) - ch_2(\tX) + ch_2(\nF))H^{n-2}
\end{align*}
We may then rewrite it to obtain
\begin{multline}\label{eq:c2tf}
    \int_X c_2(\tF) H^{n-2} =  \int_X (K_X c_1(\nF)+ c_2(\tX) + c_1(\nF)^2 )H^{n-2}+ \\ -  \deg_H(Z_2) .  
\end{multline}

The degree of $Z_2$ is bounded above by the number of singularities of $\sF$ restricted to a general surface of class $H^{n-2}$. Then we get the following general inequality. 

\begin{lemma}\label{lem:lower-bound}
Let $\sF$ be a codimension-one distribution on a complex projective manifold $X$, and let $H$ be a very ample divisor. Then
\begin{equation}\label{eq:bound-C2}
\int_X c_2(\tF)H^{n-2} \geq -(n-2)\int_X \Bigg(  c_1(\nF) +K_X +\frac{(n-1) }{2} H\Bigg)H^{n-1}.    
\end{equation}
\end{lemma}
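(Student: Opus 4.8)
The plan is to make precise the remark preceding the statement. By \eqref{eq:c2tf} it suffices to bound $\deg_H(Z_2)$ from above by
\[
\int_X\bigl(c_2(\tX)+K_Xc_1(\nF)+c_1(\nF)^2\bigr)H^{n-2} + (n-2)\int_X\Bigl(c_1(\nF)+K_X+\tfrac{n-1}{2}H\Bigr)H^{n-1},
\]
and I will show that this quantity equals the number of singularities (counted with multiplicity) of $\sF$ restricted to a general surface in the class $H^{n-2}$.

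First I would fix $S=H_1\cap\dots\cap H_{n-2}$, a general complete intersection of members of $|H|$. By Bertini, $S$ is a smooth projective surface, and for general $S$ the restriction $\omega|_S\in H^0(\Omega_S^1\otimes\nF|_S)$ is nonzero with zero scheme $Z'=\sing(\sF|_S)$ of pure codimension two, i.e.\ zero-dimensional. In local coordinates on $X$ in which $S$ is a linear slice, the coefficients of $\omega|_S$ form a subset of the restrictions to $S$ of the coefficients of $\omega$; hence the ideal of $Z\cap S$ is contained in that of $Z'$, that is, $Z\cap S\subseteq Z'$ scheme-theoretically. Finally, for general $S$ the only components of $Z$ meeting $S$ are those of codimension two, so $Z\cap S$ is zero-dimensional of length $\deg_H(Z_2)$. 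Combining, $\deg_H(Z_2)=\ell(Z\cap S)\le\ell(Z')$.

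Second, since $Z'$ is the zero scheme of a section of the rank-two bundle $\Omega_S^1\otimes\nF|_S$ of the expected codimension, $\ell(Z')=\int_S c_2(\Omega_S^1\otimes\nF|_S)=\int_S\bigl(c_2(\tS)+K_Sc_1(\nF|_S)+c_1(\nF|_S)^2\bigr)$. Now I would use adjunction $K_S=(K_X+(n-2)H)|_S$, the normal bundle sequence $0\to\tS\to\tX|_S\to\OO_S(H)^{\oplus(n-2)}\to0$ to get $c_2(\tS)=\bigl(c_2(\tX)+(n-2)K_XH+\tbinom{n-1}{2}H^2\bigr)|_S$, and the identity $\int_S\alpha|_S=\int_X\alpha H^{n-2}$, to rewrite $\ell(Z')$ as the displayed quantity above. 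By \eqref{eq:c2tf} this reads $\ell(Z')=\int_Xc_2(\tF)H^{n-2}+\deg_H(Z_2)+(n-2)\int_X\bigl(c_1(\nF)+K_X+\tfrac{n-1}{2}H\bigr)H^{n-1}$, so $\int_Xc_2(\tF)H^{n-2}=\bigl(\ell(Z')-\deg_H(Z_2)\bigr)-(n-2)\int_X\bigl(c_1(\nF)+K_X+\tfrac{n-1}{2}H\bigr)H^{n-1}$, and the inequality \eqref{eq:bound-C2} follows from $\ell(Z')\ge\deg_H(Z_2)$. The nonnegative defect $\ell(Z')-\deg_H(Z_2)$ is precisely the invariant $\delta(\sF,H)$ of Theorem~\ref{thm:A}, so the same computation prepares the matching upper bound there.

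The main obstacle is the genericity input of the second paragraph: by a Bertini/incidence-variety argument one must check that a general $S$ is smooth, that $\omega|_S$ is nonzero with no divisorial zeros — equivalently, that the tangency locus $\{p\in S:\tS_p\subseteq(\tF)_p\}$ has dimension zero, which holds since this is two conditions on the surface $S$ — and that $S$ avoids the components and embedded points of $Z$ of codimension $\ge3$, so that $\ell(Z\cap S)$ genuinely computes $\deg_H(Z_2)$ and $Z'$ is zero-dimensional. Everything after that is formal Chern-class bookkeeping.
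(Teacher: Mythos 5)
Your proof is correct and follows essentially the same route as the paper: bound $\deg_H(Z_2)$ by $\ell(\sing(\sF|_S))=\int_S c_2(\Omega^1_S\otimes\nF)$ for a general surface $S$ in the class $H^{n-2}$, compute that Chern number via adjunction and the (co)normal sequence, and feed the result into \eqref{eq:c2tf}. One small caveat on your closing side remark (irrelevant to this lemma): the defect $\ell(Z')-\deg_H(Z_2)$ is only bounded above by $\delta(\sF,H)$ via Lemma \ref{lem:key-lemma} (and that lemma needs integrability), not equal to it in general.
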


One could ask that $H$ be only ample and work with $aH$ for some $a \gg 0$. The only problem is that the right-hand side is not homogeneous on $a$, yielding a weaker result.

\begin{proof}
Let $S$ be a general surface in the class $H^{n-2}$ transverse to $\sF$. Then $Z_2\cap S \subset \sing(\sF|_S)$. In particular,
\[
\deg_H(Z_2) = \int_S [Z_2 \cap S] \leq \ell(\sing(\sF|_S)) = \int_S c_2(\Omega^1_S\otimes \nF).   
\]
On the other hand, the conormal sequence for $S$ is 
\[
0 \lra \OS(-H)^{n-2} \lra \Omega_X^1|_S \lra \Omega_S^1 \lra 0 .
\]
By adjunction, $K_S = (K_X + (n-2)H)|_S$.  Then,
{
\begin{align*}
    \int_S c_2(\Omega_S^1\otimes \nF) & = \int_S \frac{c_1(\Omega_S^1\otimes \nF)^2}{2} - ch_2(\Omega_S^1\otimes \nF) \\
    &  = \int_X \Bigg((c_1(\nF) +  (n-2) H + K_X)c_1(\nF) +  \\ & \qquad +c_2(\tX) + (n-2) K_X H +\frac{(n-2)(n-1) }{2} H^2\Bigg)H^{n-2}  . 
\end{align*}
}
Therefore, by equation \eqref{eq:c2tf} and the above discussion,
{
\begin{align*}
   \int_X c_2(\tF)H^{n-2} &\geq \int_X (K_X c_1(\nF)+ c_2(\tX) + c_1(\nF)^2 )H^{n-2} \\ & \qquad -\int_S c_2(\Omega_S^1\otimes \nF) \\
  &= -(n-2)\int_X \Bigg(  c_1(\nF) +K_X +\frac{(n-1) }{2} H\Bigg)H^{n-1} .
\end{align*}
}
\end{proof}

Recall that the discriminant of a coherent sheaf $F$ of rank $r$ is $\Delta(F) = 2r\,c_2(F) - (r-1)^2c_1(F)^2$. If $F$ is torsion-free and $\mu_H$-semistable, then it satisfies the Bogomolov inequality $\int_X \Delta(F)H^{n-2} \geq 0$. From Lemma \ref{lem:lower-bound} we deduce the following weaker inequality, without the semistability hypothesis:
\begin{equation}\label{eq:disc-bound}
     \int_X \Delta(\tF)H^{n-2} \geq -(n-2)\int_X (K_X+c_1(\nF)+(n-1)H)^2H^{n-2}.
\end{equation}

For $X = \pn$, $H$ the hyperplane class, and $c_1(\nF) = (d+2)H$, equation \eqref{eq:c2tf} and inequalities , \eqref{eq:bound-C2} and \eqref{eq:disc-bound} become
\begin{align}\label{eq:c2tfPN}
c_2(\tF)  &= d^2 + \frac{(n-3)(n-2d)}{2} + 2 - \deg(Z_2), \\
c_2(\tF) & \geq   \frac{(n-2)(n-1-2d)}{2} \\
\Delta(\tF) &\geq -(n-2)d^2.
\end{align}

For our purposes, we only need the second Chern class. However, it would also be interesting to describe the higher Chern classes of $\tF$ in terms of $Z$. For the case $X= \p3$, see \cite{artigao}. 

\subsection{A Local Invariant}\label{ssec:Loc}
Consider a germ of codimension-one foliation $\sG$ on $(\CC^2,0)$ defined by a germ of 1-form $\eta$, singular at the origin. Define the integer
\begin{equation}\label{eq:delta-local}
\delta(\sG,0) := \ell\left(  \frac{\OO_{\CC^2,0} }{ J(\eta, d\eta)} \right) = \dim_{\CC} \frac{\OO_{\CC^2,0} }{ J(\eta, d\eta)},     
\end{equation}
where $J(\eta, d\eta)$ denotes the ideal generated by the coefficients of $\eta$ and $d\eta$. 
This number does not depend on the choice of $\eta$ defining $\sG$ and is an analytic invariant.

The local invariant $\delta(\sG,0)$ is clearly bounded above by the Milnor number $\mu(\sG,0) = \ell\left({\OO_{\CC^2,0} }/{ J(\eta)} \right)$. Moreover, it is related to the first-order unfoldings of $\sG$. Following Suwa \cite{Suwa-unfoldings}, define 
\[
I(\eta) = \{\, h \in \OO_{\CC^2,0} \mid h d\eta = \eta \wedge \theta, \, \text{for some}\, \theta \in \Omega^1_{\CC^2,0} \,\}.
\]
This ideal is sometimes referred to as the ideal of \emph{persistent singularities}; see \cite{CMQ} and references therein. If $\eta = a\,dx + b\,dy$, then $d\eta = f \,dx\wedge dy$ where $f = \frac{\partial b}{\partial x} -\frac{\partial a}{\partial y}$. Hence,
\[
I(\eta) = (J(\eta) : (f) ) = ( J(\eta) : J(\eta,d\eta)).
\]
First-order unfoldings of $\sG$ are parameterized by the set $U(\sG) = I(\eta)/J(\eta)$, see \cite[\S4]{Suwa-unfoldings}. 

\begin{lemma}\label{lem:unfol-delta}
Let $\sG$ be a germ of foliation on $(\CC^2,0)$. Then, $\ell(U(\sG)) = \delta(\sG,0)$. 
\end{lemma}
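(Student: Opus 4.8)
The plan is to establish the identity $\ell(U(\sG)) = \delta(\sG,0)$ by a purely commutative-algebra computation with the three ideals involved: $J(\eta)$, $J(\eta,d\eta) = J(\eta) + (f)$, and $I(\eta) = (J(\eta):(f))$. Write $R = \OO_{\CC^2,0}$, $\mathfrak{a} = J(\eta)$ and $\mathfrak{b} = \mathfrak{a} + (f)$, so that $U(\sG) = I(\eta)/\mathfrak{a} = (\mathfrak{a}:(f))/\mathfrak{a}$ and $\delta(\sG,0) = \ell(R/\mathfrak{b})$. The first observation I would record is that all the relevant quotients have finite length: since $\sG$ is singular at $0$ with isolated singularity (a germ of foliation on a surface), $\mathfrak{a}$ is $\mathfrak{m}$-primary, hence so is $\mathfrak{b}$, and $I(\eta) \supseteq \mathfrak{a}$ also has finite colength. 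So every length appearing below is finite and additivity of length over short exact sequences applies.

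The key step is to produce an exact sequence relating these modules. Multiplication by $f$ gives a map $R \xrightarrow{\cdot f} R/\mathfrak{a}$ whose kernel is exactly $(\mathfrak{a}:(f)) = I(\eta)$ and whose image is $(\mathfrak{a} + (f))/\mathfrak{a} = \mathfrak{b}/\mathfrak{a}$. Thus
\[
0 \lra I(\eta) \lra R \xrightarrow{\;\cdot f\;} R/\mathfrak{a} \lra R/\mathfrak{b} \lra 0,
\]
but this involves the infinite-length module $R$, so instead I would work one level down: the induced map $\bar f \colon R/\mathfrak{a} \to R/\mathfrak{a}$ has kernel $I(\eta)/\mathfrak{a} = U(\sG)$ and cokernel $R/\mathfrak{b}$, giving the short four-term exact sequence of finite-length modules
\[
0 \lra U(\sG) \lra R/\mathfrak{a} \xrightarrow{\;\bar f\;} R/\mathfrak{a} \lra R/\mathfrak{b} \lra 0.
\]
Additivity of length then yields $\ell(U(\sG)) - \ell(R/\mathfrak{a}) + \ell(R/\mathfrak{a}) - \ell(R/\mathfrak{b}) = 0$, i.e. $\ell(U(\sG)) = \ell(R/\mathfrak{b}) = \delta(\sG,0)$, which is the claim.

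The main obstacle is making sure the formula $I(\eta) = (J(\eta):(f))$ used in the identification is correct, since the statement as quoted asserts it with a one-line justification; I would want to verify it carefully. The inclusion $(J(\eta):(f)) \subseteq I(\eta)$ is immediate: if $hf \in J(\eta) = (a,b)$, write $hf = pa + qb$, then $h\,d\eta = hf\,dx\wedge dy = (pa+qb)\,dx\wedge dy = \eta \wedge (q\,dx - p\,dy)$, so $h \in I(\eta)$. Conversely, if $h\,d\eta = \eta\wedge\theta$ with $\theta = \alpha\,dx + \beta\,dy$, then $hf\,dx\wedge dy = (a\beta - b\alpha)\,dx\wedge dy$, so $hf = a\beta - b\alpha \in (a,b) = J(\eta)$; hence $h \in (J(\eta):(f))$. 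The other delicate point is the finiteness of colength of $\mathfrak{a}$ and $\mathfrak{b}$: for this I would invoke that a germ of singular foliation on a surface has isolated singularity at $0$ (so $V(J(\eta))$ is either $\{0\}$ or empty near $0$; if empty, $0$ is not a singularity, contradiction), whence $\mathfrak{a}$ is $\mathfrak{m}$-primary. Everything else is the routine additivity argument above.
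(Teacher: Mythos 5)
Your proof is correct, but it takes a genuinely different route from the paper's. Both arguments come down to computing the length of $\ann_A(\bar f)$, where $A = \OO_{\CC^2,0}/J(\eta)$: the paper does this by noting that $A$ is an Artinian complete intersection, hence Gorenstein, and invoking Matlis duality to get $\ell(\Hom_A(A/(\bar f),A)) = \ell(A/(\bar f))$; you instead use the four-term exact sequence $0 \to \ker(\bar f) \to A \xrightarrow{\bar f} A \to A/(\bar f) \to 0$ and additivity of length. Your argument needs only that $A$ has finite length -- no Gorenstein property -- so it is more elementary and would apply verbatim with any $\mathfrak{m}$-primary ideal in place of the complete intersection $J(\eta)$; what the paper's duality argument buys in exchange is the structural statement that $U(\sG)$ is the Matlis dual of $A/(\bar f)$, not merely of the same length. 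Two minor points. First, your verification of $(J(\eta):(f)) \subseteq I(\eta)$ has a harmless sign slip: with $\theta = q\,dx - p\,dy$ one computes $\eta\wedge\theta = -(pa+qb)\,dx\wedge dy = -hf\,dx\wedge dy$, so one should take $-\theta$ (immaterial, since $I(\eta)$ only asks for \emph{some} $\theta$). Second, the finiteness of $\ell(A)$ rests, exactly as in the paper's proof, on the convention that the $1$-form defining a foliation germ on a surface is saturated, i.e.\ $a$ and $b$ have no common factor; this is worth saying explicitly, since an arbitrary singular $1$-form need not have isolated zeros.
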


\begin{proof}
If $\sG$ is regular, then $\ell(U(\sG)) = \delta(\sG,0) = 0$. Assume $\sG$ is singular. Consider the algebra $A = \OO_{\CC^2,0}/J(\eta)$. Since $a$ and $b$ have no common factors, $A$ is a complete intersection ring of dimension zero, hence Artinian Gorenstein. Moreover, $U(\sG)$ is a finite $A$-module, and we have
\[
U(\sG) = \ann_A(\overline{f}) = \Hom_A(A/(\overline{f}), A).
\]
Therefore, by Matlis' duality \cite[Proposition 3.2.12]{BH}, 
\[
\ell(U(\sG)) = \ell (\Hom_A(A/(\overline{f}), A)) = \ell (A/(\overline{f})) = \delta(\sG,0).
\]
\end{proof}

Foliations satisfying $U(\sG) = 0$ are then rigid, i.e., every unfolding is trivial, see \cite[\S6]{Suwa-unfoldings}. In light of the Lemma above, this means that either $\sG$ is regular, or $d\eta(0) \neq 0$, a Kupka singularity. On the other end, if $\delta(\sG,0) = \mu(\sG,0)$, then $U(\sG) = A$ and there exists a holomorphic 1-form $\theta$  such that $d\eta = \eta \wedge \theta$, or, equivalently, $\eta$ admits a first-order unfolding 
\[
\widetilde \eta  =  \eta + t\theta + dt
\]
corresponding to the class of $1 \in U(\sG)$. If this unfolding extends to an infinitesimal unfolding of $\eta$, then $\eta = gdf$ for some holomorphic functions $f,g$, see \cite[Proposition 6.14]{Suwa2}.

The invariant $\delta(\sG,0)$ plays a major role in our main result. The following is our key lemma.

\begin{lemma}\label{lem:key-lemma}
Let $\sF$ be a germ of codimension-one foliation on $(\CC^n,0)$ singular along a subscheme of pure codimension two $Z$. Let $(S, 0) \subset (\CC^n,0)$ be a germ of smooth surface transverse to $\sF$. Then the intersection multiplicity $(Z\cdot S)_0$ satisfies
\[
\mu(\sF|_S,0)  \geq (Z\cdot S)_0 \geq \mu(\sF|_S,0) - \delta(\sF|_S,0),
\]
where $\mu$ denotes the Milnor number.
\end{lemma}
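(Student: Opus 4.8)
The plan is to work in suitable local coordinates and compare three ideals in $\OO_{\CC^n,0}$: the ideal $\cI_Z$ cutting out $Z$, the Jacobian-type ideal defining $\sing(\sF|_S)$ on $S$, and an intermediate ideal built from the coefficients of a defining form and its differential. First I would choose coordinates $(x_1,\dots,x_n)$ so that $S = \{x_3 = \dots = x_n = 0\}$ and write the foliation near $0$ by a $1$-form $\omega = \sum_{i=1}^n a_i\, dx_i$ with $\sing(\sF) = V(a_1,\dots,a_n)$ (up to the radial relations this is the local picture coming from the defining sequence). Transversality of $S$ to $\sF$ means $\omega|_S \neq 0$, and $\sF|_S$ is defined by $\eta := \omega|_S = a_1(x_1,x_2,0,\dots,0)\,dx_1 + a_2(x_1,x_2,0,\dots,0)\,dx_2$, a germ of $1$-form on $(\CC^2,0)$. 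The two ideals to compare on $S \cong (\CC^2,0)$ are $J(\eta) = (\bar a_1, \bar a_2)$, whose colength is $\mu(\sF|_S,0)$, and $J(\eta, d\eta) = (\bar a_1, \bar a_2, \bar f)$ with $f = \partial_{x_1} a_2 - \partial_{x_2} a_1$, whose colength is $\delta(\sF|_S,0)$. Since $J(\eta, d\eta) \supseteq J(\eta)$, we have the two obvious inequalities $\mu \geq \mu - \delta$ and $\delta \leq \mu$; the content is sandwiching $(Z \cdot S)_0 = \dim_\CC \OO_{S,0}/(\cI_Z \cdot \OO_{S,0})$ between these.

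The upper bound $(Z\cdot S)_0 \leq \mu(\sF|_S,0)$ should follow from the fact that $\cI_Z \cdot \OO_{S,0} \supseteq (\bar a_1, \bar a_2) = J(\eta)$: restricting the generators $a_i$ of (a local model of) $\cI_Z$ to $S$ produces in particular $\bar a_1, \bar a_2$, so the restricted ideal is at least as large as $J(\eta)$, giving the colength inequality. The lower bound $(Z\cdot S)_0 \geq \mu(\sF|_S,0) - \delta(\sF|_S,0)$ is where integrability enters and is the crux. The key point is that the integrability condition $\omega\wedge d\omega = 0$ forces the coefficients of $d\omega$ — a priori extra functions cutting down the restricted ideal — to lie, after restriction to $S$, in the ideal generated by $\bar a_1, \bar a_2$ together with $\bar f$; more precisely, one shows $\cI_Z\cdot\OO_{S,0} \subseteq J(\eta,d\eta) = (\bar a_1,\bar a_2,\bar f)$, using that the derivatives $\partial_{x_k} a_i$ with $k \geq 3$ restricted to $S$ can be rewritten, via $\omega \wedge d\omega = 0$ and the transversality of $S$, in terms of $\bar a_1, \bar a_2, \bar f$ and their multiples. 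Combining the two inclusions $J(\eta) \subseteq \cI_Z\cdot\OO_{S,0} \subseteq J(\eta,d\eta)$ and taking colengths yields
\[
\mu(\sF|_S,0) \;\geq\; (Z\cdot S)_0 \;\geq\; \ell\!\left(\OO_{S,0}/J(\eta,d\eta)\right) \;=\; \mu(\sF|_S,0) - \delta(\sF|_S,0),
\]
where the last equality is just $\ell(\OO/J(\eta)) - \ell(J(\eta,d\eta)/J(\eta))$ unwound, i.e. $\mu - \delta$.

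The main obstacle, as indicated, is establishing the inclusion $\cI_Z\cdot\OO_{S,0} \subseteq (\bar a_1,\bar a_2,\bar f)$. One has to make precise what "the local ideal of $Z$ restricted to $S$" is — the singular scheme of $\sF$ is defined by the coefficients of $\omega$ as a section of $\Omega^1_X \otimes \nF$, so locally by $(a_1,\dots,a_n)$ modulo the relations among them — and then exploit $\omega \wedge d\omega = 0$ coordinate-wise. Writing $d\omega = \sum_{i<j} f_{ij}\, dx_i\wedge dx_j$, integrability gives relations $a_i f_{jk} - a_j f_{ik} + a_k f_{ij} = 0$; restricting to $S$ and using that $\bar a_1$ or $\bar a_2$ is a nonzerodivisor in $\OO_{S,0}$ (not automatic — if both vanish to high order one must instead argue with the full ideal, since $Z$ has codimension two so $(\bar a_1,\bar a_2)$ is $\OO_{S,0}$-primary) lets one solve for $\bar f_{ij}$ in terms of $\bar f_{12}= \bar f$ and the $\bar a_i$. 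I would handle the possible non-regularity of $\bar a_1,\bar a_2$ by working with colengths directly (both ideals are $\mathfrak m$-primary since $(Z\cdot S)_0 < \infty$) rather than with explicit division, perhaps passing to the associated graded or using flatness of a generic pencil of surfaces $S_t$ so that $(Z\cdot S_t)_0$ and the relevant colengths are upper-semicontinuous and can be computed on a general member where $\bar a_1$ is a nonzerodivisor. That semicontinuity/generic-transversality bookkeeping, rather than any single deep input, is what I expect to consume most of the proof.
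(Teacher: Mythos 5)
Your setup, coordinates, and upper bound coincide with the paper's, but the lower bound — the only nontrivial part — has a genuine gap, in fact two intertwined errors. First, an arithmetic one: by definition $\delta(\sF|_S,0)=\ell\bigl(\OO_{S,0}/J(\eta,d\eta)\bigr)$, so the colength of $J(\eta,d\eta)=(\bar a_1,\bar a_2,\bar f)$ is $\delta$, not $\mu-\delta$ as you assert at the end of your chain of inclusions. Hence even if your target inclusion $\cI_Z\cdot\OO_{S,0}\subseteq J(\eta,d\eta)$ were established, it would only give $(Z\cdot S)_0\geq \delta(\sF|_S,0)$, which is not the statement and is vacuous precisely in the most important case: at a Kupka point $\bar f$ is a unit, $\delta=0$, and the lemma demands $(Z\cdot S)_0\geq\mu$, whereas your bound gives $(Z\cdot S)_0\geq 0$. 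Second, the inclusion you aim for is not the one integrability provides. Writing $d\omega=\sum_{i<j}f_{ij}\,dx_i\wedge dx_j$, the relation $a_1f_{2k}-a_2f_{1k}+a_kf_{12}=0$ restricted to $S$ gives $\bar a_k\bar f\in(\bar a_1,\bar a_2)$ for $k\geq 3$, i.e.\ $\bar a_k\in\bigl(J(\eta):\bar f\bigr)$, membership in the colon ideal (Suwa's ideal $I(\eta)$ of persistent singularities) — not membership in $(\bar a_1,\bar a_2,\bar f)$. These are genuinely different ideals: in the Artinian ring $A=\OO_{S,0}/J(\eta)$ their images are $\ann_A(\bar f)$ and $\bar f A$, of lengths $\delta$ and $\mu-\delta$ respectively.

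The missing idea is the duality that converts the colon-ideal information into the stated bound. The paper sets $A=\OO_{S,0}/J(\eta)$, notes $(Z\cdot S)_0=\mu-\ell(J)$ for $J=\cI_{Z\cap S}/J(\eta)$, shows $J\subseteq\ann_A(\bar f)$ as above, and then uses that $A$ is an Artinian complete intersection, hence Gorenstein, so Matlis duality gives $\ell(\ann_A(\bar f))=\ell\bigl(\Hom_A(A/\bar fA,A)\bigr)=\ell(A/\bar fA)=\delta(\sF|_S,0)$; this is exactly Lemma \ref{lem:unfol-delta}. This also disposes of your worry about $\bar a_1,\bar a_2$ failing to be a regular sequence and the proposed semicontinuity bookkeeping: since $W=\sing(\sF|_S)$ is zero-dimensional, $\bar a_1,\bar a_2$ have no common factor, $A$ is automatically a zero-dimensional complete intersection, and no generic-pencil or associated-graded argument is needed.
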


\begin{proof}
    Let $\omega = \sum_{j=1}^n a_j dx_j \in \Omega_{\CC^n,0}^1$ a germ of integrable 1-form defining $\sF$. Up to a change of coordinates, we may suppose that $S = V(x_3, \dots, x_n)$ and denote by $\overline{a_j}$ the class of $a_j$ modulo $(x_3, \dots,x_n)$. Let $W = \sing(\sF|_W)$. Then 
    \[
    I_{(Z\cap S)} = (\overline{a_1}, \dots, \overline{a_n}) \quad \text{and} \quad  I_W = (\overline{a_1},\overline{a_2}). 
    \]
    The inequality $\mu(\sF|_S,0)  \geq (Z\cdot S)_0$ follows directly. 
    
    Consider the quotient ring $A = \OO_S/I_W  = \OO_{\CC^n,0}/(a_1,a_2,x_3, \dots, x_n)$ and the ideal $J = I_{(Z\cap S)}\cdot A = I_{(Z\cap S)}/I_W$. Then, from the short exact sequence
    \[
    0 \lra J \lra A \lra \OO_S/I_{(Z\cap S)} \lra 0
    \]
    one gets
    \[
    (Z\cdot S)_0 = \ell(\OO_S/I_{(Z\cap S)}) = \ell(\OO_S/I_W) -\ell(J) = \mu(\sF|_S,0) - \ell(J).
    \]
    To conclude, we only need to prove that $\ell(J) \leq \delta(\sF|_S,0)$. Note that $\sF|_S$ is defined by $\overline{\omega} = \overline{a_1} dx_1 + \overline{a_2}dx_2$ and $d\overline{\omega} = fdx_1\wedge dx_2$ for $f = \frac{\partial \overline{a_2} }{\partial x_1} - \frac{\partial \overline{a_1} }{\partial x_2}$. Then 
    \[
    \delta(\sF|_S,0) =  \ell \left(\frac{\OO_S}{\left(\overline{a_1},\overline{a_2}, f \right)}\right) = \ell \left(A/(f)\right).
    \]
    On the other hand, the integrability equation $\omega \wedge d\omega = 0$ translates to 
    \[
    a_k \left( \frac{\partial a_j }{\partial x_i} - \frac{\partial  a_i }{\partial x_j} \right) - 
    a_j \left( \frac{\partial a_k }{\partial x_i} - \frac{\partial  a_i }{\partial x_k} \right) + 
    a_i \left( \frac{\partial a_k }{\partial x_j} - \frac{\partial  a_j }{\partial x_k} \right) = 0, \quad \forall i<j<k.
    \]
   Therefore, for every $k\geq 3$ we have $\overline{a_k}f  = 0 \in A$. This means that $J \subset \ann_A(f)$; in particular, $\ell(J) \leq \ell(\ann_A(f))$. As in the proof of Lemma \ref{lem:unfol-delta}, we obtain 
   \[
   \ell(J) \leq \ell(\ann_A(f)) = \delta(\sF|_S,0).
   \]
\end{proof}

Note that $\sF$ can be seen as an unfolding of $\sF|_S$. If $\delta(\sF|_S,0) = 0$ then $\sF$ has a Kupka singularity. In particular, $Z$ must be smooth and $\sF$ is biholomorphic to a foliation given by a 1-form in two variables, a trivial unfolding. 

Following the proof of Lemma \ref{lem:key-lemma}, one sees that if $Z$ is a (local) complete intersection, then $\mu(\sF|_S,0)  = (Z\cdot S)_0$.

Finally, we remark that $\delta(\sG,0)$ can be defined, mutatis mutandis, for one-dimensional foliations on $(\CC^n,0)$ with an isolated singularity. Then, the corresponding version of Lemma \ref{lem:unfol-delta}, although the relation to unfolding is less clear. An analogue of Lemma \ref{lem:key-lemma} for this situation eludes us for the moment.

\section{Main Results}\label{sec:Main}
First, we define the global counterpart of the invariant $\delta$. Given a foliation $\sG$ on a surface $S$, define
\[
\delta(\sG) := \sum_{p\in \sing(\sG)} \delta(\sG,p). 
\]
Given a foliation $\sF$ on a complex projective manifold $X$ of dimension $n\geq 3$ with a very ample divisor $H$, define 
\[
\delta(\sF,H) := \delta(\sF|_S)
\]
for a generic surface $S$ of class $H^{n-2}$.

\begin{theorem}\label{thm:general}[Theorem \ref{thm:A}]
  Let $X$ be a complex projective manifold with a very ample divisor $H$. Let $\sF$ be a foliation on $X$. Then we have the following inequality.
  \[
  \delta(\sF,H) + P(\sF) \geq \int_Xc_2(\tF)H^{n-2} \geq P(\sF)
  \]
  where $P(\sF) = -(n-2)\int_X \left(  c_1(\nF) +K_X +\frac{(n-1) }{2} H\right)H^{n-1}$.
\end{theorem}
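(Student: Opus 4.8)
The plan is to combine the global lower bound from Lemma~\ref{lem:lower-bound} with a matching upper bound obtained by refining the naive estimate $\deg_H(Z_2) \leq \ell(\operatorname{sing}(\sF|_S))$ used in that proof. Recall that equation~\eqref{eq:c2tf} reads
\[
\int_X c_2(\tF) H^{n-2} = \int_X (K_X c_1(\nF) + c_2(\tX) + c_1(\nF)^2) H^{n-2} - \deg_H(Z_2),
\]
so the theorem is equivalent to the two-sided estimate
\[
\int_S c_2(\Omega^1_S \otimes \nF) - \delta(\sF|_S) \;\leq\; \deg_H(Z_2) \;\leq\; \int_S c_2(\Omega^1_S \otimes \nF)
\]
for a generic surface $S$ of class $H^{n-2}$, since the computation in Lemma~\ref{lem:lower-bound} already identifies $\int_X(K_X c_1(\nF) + c_2(\tX) + c_1(\nF)^2)H^{n-2} - \int_S c_2(\Omega^1_S\otimes\nF)$ with $P(\sF)$. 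The right-hand inequality is immediate from $Z_2\cap S \subset \operatorname{sing}(\sF|_S)$, so the content is the left-hand inequality, i.e. the lower bound for $\deg_H(Z_2)$.

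First I would choose $S$ generic enough that: it is smooth and transverse to $\sF$ (so $Z_2 \cap S$ is a finite set of reduced... no — so that $Z_2\cap S$ is finite and $[Z_2]\cdot H^{n-2} = \sum_{p \in Z_2\cap S}(Z_2\cdot S)_p$ by Bézout-type genericity), it meets only the codimension-two part $Z_2$ of $Z$ and not the lower-dimensional components, and it passes through each point of $Z_2\cap S$ at a point where $\sF$ is, locally, a foliation singular along pure codimension two (which holds on a dense open subset of $Z_2$). Then at each such $p$ I apply the local Lemma~\ref{lem:key-lemma}: for the germ of $\sF$ at $p$ and the germ $(S,p)$ transverse to it,
\[
\mu(\sF|_S, p) \;\geq\; (Z\cdot S)_p \;\geq\; \mu(\sF|_S,p) - \delta(\sF|_S,p).
\]
Since $S$ meets only $Z_2$ near $p$, we have $(Z\cdot S)_p = (Z_2 \cdot S)_p$. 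Summing over $p \in \operatorname{sing}(\sF|_S)$ — noting $\operatorname{sing}(\sF|_S)$ is exactly $Z\cap S = Z_2\cap S$ for generic $S$ — and using $\sum_p \mu(\sF|_S,p) = \ell(\operatorname{sing}(\sF|_S)) = \int_S c_2(\Omega^1_S\otimes\nF)$ together with $\sum_p \delta(\sF|_S,p) = \delta(\sF|_S) = \delta(\sF,H)$ by definition, gives both
\[
\int_S c_2(\Omega^1_S\otimes\nF) \;\geq\; \deg_H(Z_2) \;\geq\; \int_S c_2(\Omega^1_S\otimes\nF) - \delta(\sF,H).
\]
Feeding this into~\eqref{eq:c2tf} yields exactly the claimed chain of inequalities.

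I expect the main obstacle to be the genericity bookkeeping, rather than any hard new estimate: one must verify that for $S$ generic in the linear system of $H$ (intersected $n-2$ times, or via a Bertini argument on the cone / repeated hyperplane sections) the surface $S$ is smooth, transverse to $\sF$ away from $Z$, misses the finitely many "bad" points (the lower-dimensional part of $Z$, the non-reduced or non-foliated locus, points where $S$ fails to be transverse to $Z_2$), and that at every remaining intersection point the hypotheses of Lemma~\ref{lem:key-lemma} genuinely hold — in particular that $\sF$ restricted to a neighborhood of $p$ is singular along a \emph{pure} codimension-two scheme, so that $(Z\cdot S)_p = (Z_2\cdot S)_p$, and that $S$ is transverse to $\sF$ in the sense required (the tangent direction of $S$ is not everywhere in $\tF$). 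A secondary point to handle carefully is the identity $\delta(\sF|_S) = \delta(\sF,H)$: this is the definition of $\delta(\sF,H)$, but one should remark that $\delta(\sF|_S,p)$ is independent of the defining form and semicontinuous, so a generic $S$ realizes the value used in the statement and no component of $Z_2$ is "lost". With these genericity statements in place the rest is the short algebraic substitution above.
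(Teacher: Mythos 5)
Your proposal is correct and follows essentially the same route as the paper: a generic surface $S$ of class $H^{n-2}$, the pointwise two-sided bound of Lemma~\ref{lem:key-lemma} summed over $Z_2\cap S$ to sandwich $\deg_H(Z_2)$ between $\int_S c_2(\Omega^1_S\otimes\nF)$ and $\int_S c_2(\Omega^1_S\otimes\nF)-\delta(\sF,H)$, then substitution into~\eqref{eq:c2tf} via the computation already done in Lemma~\ref{lem:lower-bound}. The genericity bookkeeping you flag is exactly what the paper also assumes (transversality to $\sF$ and to $Z_2$, avoidance of lower-dimensional components), so there is no gap.
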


\begin{proof}
 Let $S$ be a general complete intersection surface in the class of $H^{n-2}$ such that $\sF$ is transversal to $S$. Thus $\sF|_S$ has normal bundle $\nF|_S$ and $S$ intersects $Z_2$ transversely and avoids lower-dimensional components of $Z$. By Lemma \ref{lem:key-lemma},
 \begin{equation}
    c_2(\Omega^1_S \otimes \nF) \geq \deg_H(\sing(\sF)_2) \geq c_2(\Omega^1_S \otimes \nF) - \delta(\sF,H)
\end{equation}
Then, by Lemma \ref{lem:lower-bound}, we get the lower bound. The upper bound follows from the same argument in Lemma \ref{lem:lower-bound} and the inequalities above. 
\end{proof}


For $X = \pn$, $H$ the hyperplane class and $\nF = \opn(d+2)$ we can get more. Indeed, in this case, we may apply Lemma \ref{lem:bound-domega}. 

\begin{corollary}\label{cor:boundPN}[Theorem \ref{thm:B}]
Let $\sF$ be a degree-$d$ codimension-one foliation on $\pn$. Then 
\[
 d+ 1 \leq \deg Z_2 \leq d^2+d+1
\]
or, equivalently, 
\[
d^2 + \frac{(n-2)(n-1-2d)}{2} \geq c_2(\tF) \geq \frac{(n-2)(n-1-2d)}{2}.
\]
Moreover, if $\deg Z_2 = d+1$ then $\sF$ is a rational foliation of type $(1,d+1)$.
\end{corollary}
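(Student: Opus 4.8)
The plan rests on two inputs: the local-to-global estimate $\delta(\sF,H)\le d^2$, and the numerical dictionary \eqref{eq:c2tfPN}. For the estimate, take a general plane $S\cong\p2$ cut out by $n-2$ general hyperplanes and transverse to $\sF$, as in the proof of Theorem~\ref{thm:general}; then $\sF|_S$ is a genuine degree-$d$ foliation on $\p2$ with zero-dimensional singular scheme, so $\mu(\sF|_S)=\int_{\p2}c_2(\Omega^1_{\p2}(d+2))=d^2+d+1$, and $\delta(\sF,H)=\delta(\sF|_S)$ by the definition in \S\ref{sec:Main}. By the lemma identifying the ideal of the vanishing locus of $d\omega$ with the ideal generated by the coefficients of $\omega$ and $d\omega$, together with \eqref{eq:delta-local}, $\delta(\sF|_S)$ is the length of the zero scheme $W$ of the section $d\omega_S\in H^0(\widehat\Omega^2_{\p2}(d+2))$, where $\omega_S:=\omega|_S$; and $W$ is zero-dimensional since $W\subset\sing(\sF|_S)$. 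Applying Lemma~\ref{lem:bound-domega} with $X=\p2$, $p=2$, $k=d+2$ and $K_{\p2}=-3H_S$ then gives $\delta(\sF,H)=\ell(W)\le\bigl((d+3)H_S-3H_S\bigr)^2=d^2$. Feeding this, together with Lemma~\ref{lem:key-lemma} summed over $\sing(\sF|_S)$ (equivalently Theorem~\ref{thm:general}), which yields $\mu(\sF|_S)\ge\deg Z_2\ge\mu(\sF|_S)-\delta(\sF|_S)$, gives $d+1\le\deg Z_2\le d^2+d+1$; and \eqref{eq:c2tfPN} rewrites this as the stated bounds on $c_2(\tF)$. The case $d=0$ is immediate (pencils of hyperplanes), so assume $d\ge1$.

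Suppose now $\deg Z_2=d+1$. Then the chain above forces $\delta(\sF,H)=\ell(W)=d^2$, so $W$ attains the maximum in Lemma~\ref{lem:bound-domega}. Following that lemma's proof, $\sI_W(d\,H_S)$ is globally generated; two general sections of it cut out a complete intersection that contains $W$ and has length $d^2$ by B\'ezout, hence equals $W$. From the Koszul resolution of such a complete intersection one reads $h^0(\sI_W(d\,H_S))=2$ (here $d\ge1$ is used). The three degree-$d$ coefficients $b_{01},b_{02},b_{12}$ of $d\omega_S=\sum_{i<j}b_{ij}\,dx_i\wedge dx_j$ all lie in $H^0(\sI_W(d\,H_S))$, hence are linearly dependent; writing $d\omega_S=v\cont(dx_0\wedge dx_1\wedge dx_2)$ with $v=(b_{12},-b_{02},b_{01})$, the dependence says that $v$ is everywhere orthogonal to a fixed nonzero constant vector, equivalently $d\omega_S\wedge dL_S=0$ for a nonzero linear form $L_S$ on $\CC^3$; hence $d\omega_S=dL_S\wedge\beta$ for some $1$-form $\beta$ with degree-$d$ coefficients.

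Next, $d(d\omega_S)=0$ gives $dL_S\wedge d\beta=0$, and in coordinates with $L_S=x_0$ the polynomial Poincar\'e lemma in the remaining variables lets us replace $\beta$ by $d\psi$ modulo $dx_0$, with $\psi$ homogeneous of degree $d+1$, so $d\omega_S=dL_S\wedge d\psi$. Then $\omega_S-L_S\,d\psi$ is a closed homogeneous $1$-form, hence exact, and comparing radial contractions pins down its primitive and yields $\omega_S=\tfrac{1}{d+2}\bigl(L_S\,d\psi-(d+1)\psi\,dL_S\bigr)$. Thus $\sF|_S$ is the rational foliation of type $(1,d+1)$ on $\p2$, with first integral $\psi/L_S^{\,d+1}$, for a general plane $S$.

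It remains to upgrade this to $\sF$ itself. For $d\ge1$ the line $\{L_S=0\}\subset S$ is intrinsic to $\sF|_S$ — it is the unique leaf of degree $1$, occurring with multiplicity $d+1$ in the pencil of leaves — so, as $S$ varies in its family, these invariant lines vary algebraically and their Zariski closure is an $\sF$-invariant hypersurface $V\subset\pn$ whose general plane section is a line; therefore $V=\{L=0\}$ is a hyperplane with $L|_S$ proportional to $L_S$. Likewise the pencils $\langle L_S^{\,d+1},\psi\rangle$ of plane curves glue to a pencil of degree-$(d+1)$ hypersurfaces of $\pn$ whose general member is $\sF$-invariant and whose distinguished member is $L^{d+1}$ (a degree-$(d+1)$ hypersurface whose general plane section is a $(d+1)$-st power of a line must be a $(d+1)$-st power of a hyperplane), so after clearing the factor $L^d$ one gets $\omega=\tfrac{1}{d+2}\bigl(L\,d\psi-(d+1)\psi\,dL\bigr)$, i.e.\ $\sF$ is rational of type $(1,d+1)$. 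The main obstacle is making this last step rigorous — patching the plane-by-plane data into a genuine global rational first integral — for which one invokes the standard theory of algebraically integrable foliations, the equality $\deg Z_2=d+1$ keeping the degree of the first integral under control. Everything preceding it is the sheaf- and form-bookkeeping sketched above.
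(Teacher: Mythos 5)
The inequalities are proved exactly as in the paper: $\delta(\sF,H)\le d^2$ via Lemma~\ref{lem:bound-domega} on a general plane, fed into Theorem~\ref{thm:general}, and equation~\eqref{eq:c2tfPN} translates between $\deg Z_2$ and $c_2(\tF)$; your arithmetic checks out. In the equality case, your derivation of the constant-coefficient linear relation among the three degree-$d$ coefficients of $d\omega_S$ (global generation of $\sI_W(d)$, B\'ezout forcing $W$ to be a complete intersection, Koszul giving $h^0(\sI_W(d))=2$) is a correct substitute for the paper's appeal to Noether's AF+BG theorem, and your explicit integration on the plane, yielding $\omega_S=\tfrac{1}{d+2}\bigl(L_S\,d\psi-(d+1)\psi\,dL_S\bigr)$, is sound.

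The genuine gap is the final globalization, and you flag it yourself. Knowing that $\sF|_S$ has a rational first integral of type $(1,d+1)$ for every generic plane $S$ does not formally give one for $\sF$: the claims that the invariant lines ``vary algebraically'' and sweep out a hyperplane, and that the plane pencils ``glue'' to a pencil of degree-$(d+1)$ hypersurfaces, are precisely the content that must be proved, and ``the standard theory of algebraically integrable foliations'' is not a citable step here (your intrinsic characterization of $L_S$ as ``the unique degree-one leaf'' can also fail, e.g.\ when $\psi=0$ contains lines). The paper sidesteps all of this: from $d\omega=\theta\wedge dE$ and $(d+2)\omega=\rad\cont d\omega=(\rad\cont\theta)\,dE-E\theta$ it follows that $\omega/E^{d+2}$ is a \emph{closed rational $1$-form} on the plane defining $\sF|_L$, and then \cite[Lemma 9]{CLS} extends it to a closed rational $1$-form $\eta$ on $\pn$ defining $\sF$; since $(\eta)_\infty$ restricts to the line $\{E=0\}$ with multiplicity $d+2$, it is supported on a hyperplane, $\eta$ is exact off it, and integration gives the first integral $F/H^{d+1}$. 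You should either invoke that extension lemma (replacing your last paragraph, in which case the explicit computation of $\psi$ becomes unnecessary) or supply an actual proof that generic plane-section integrability of bounded degree propagates to $\pn$; as written the proof is incomplete at this step.
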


The argument for constructing the rational first integral follows the proof of \cite[Proposi\c{c}\~ao 2.6]{FassarellaThesis}, which draws from \cite[Proposition 5.1]{LnP}. 

\begin{proof}
First, the inequalities follow from Theorem \ref{thm:general} and $\delta(\sF,H) \leq d^2$, due to Lemma \ref{lem:bound-domega}. 

Let $L$ be a generic $2$-plane in general position with respect to $\sF$, and $\omega$ be a homogenous $1$-form defining $\sF|_L$. We can write 
\[
     d\omega = Adx_1\wedge dx_2 - B dx_0\wedge dx_3 + Cdx_0 \wedge dx_1,
\]
where $A, B, C$ have degree $d$. If $\deg Z_2 = d+1$, then $\delta(\sF|L) = d^2$.
Noether's AF+BG Theorem then yields a nontrivial relation $\alpha A + \beta B + \gamma C = 0$ with $\alpha,\beta,\gamma \in \C$, not all zero. Define $E= \alpha x_0 + \beta x_1 + \gamma x_2$. Then $d\omega \wedge dE = 0$, hence $d\omega = \theta \wedge dE $ for some homogeneous $1$-form $\theta$. Contracting with the radial vector field, we get
\[
(d+2)\omega = \rad \cont d\omega = (\rad \cont \theta) dE - E\theta.
\]
Then the rational $1$-form $\frac{\omega}{E^{d+2}}$ is closed. Applying \cite[Lemma 9]{CLS}, there exists a closed rational $1$-form $\eta$ extending $\frac{\omega}{E^{d+2}}$ and defining $\sF$ outside its polar divisor $(\eta)_\infty$. Since $L$ is generic, $(\eta)_\infty$ is supported on a hyperplane $H$. Moreover, $\eta|_{\pn \setminus H}$ is exact and regular. Hence, $\sF$ has a first integral of the form $F/H^{d+1}$.     
\end{proof}

\bibliographystyle{abbrvurl}
\bibliography{biblio}

\end{document}